\newtheorem{thm}{Theorem}[section]
\newtheorem{lemma}[thm]{Lemma}
\newtheorem{proposition}[thm]{Proposition}
\theoremstyle{definition}
\newtheorem{definition}[thm]{Definition}
\newtheorem{example}[thm]{Example}
\theoremstyle{remark}
\newtheorem{question}[thm]{\bf Question}
\newcommand{\field}[1]{\mathbb{#1}}
\newcommand{\Q }{\field{Q}}
\newcommand{\Z }{\field{Z}}
\DeclareMathOperator{\qf}{qf}
  \newcounter{xenumi}
  \newenvironment{xenumerate}{%
  \begin{list}{(\alph{xenumi})}{
    \setcounter{xenumi}{1}\usecounter{xenumi}
    \setlength{\parsep}{4\p@ \@plus2\p@ \@minus\p@}
    \setlength{\topsep}{6\p@ \@plus2\p@ \@minus2\p@}
    \setlength{\itemsep}{2\p@ \@plus1\p@ \@minus\p@}
    \setlength{\labelwidth}{0mm}
    \setlength{\labelsep}{2mm}
    \setlength{\itemindent}{2mm}
    \setlength{\leftmargin}{0mm}
    \setlength{\listparindent}{0mm}
  }}{\end{list}}
\begin{document}

\title[On $t$-reductions of ideals in pullbacks]{On $t$-reductions of ideals in pullbacks $^{(\star)}$}

\thanks{$^{(\star)}$ Supported by King Fahd University of Petroleum \& Minerals under Research Grant \# RG1328.}


\author[S. Kabbaj]{S. Kabbaj}
\address{Department of Mathematics and Statistics, King Fahd University of Petroleum \& Minerals, Dhahran 31261, KSA}
\email{kabbaj@kfupm.edu.sa}

\author[A. Kadri]{A. Kadri}
\address{Department of Mathematics and Statistics, King Fahd University of Petroleum \& Minerals, Dhahran 31261, KSA}
\email{g201004080@kfupm.edu.sa}

\author[A. Mimouni]{A. Mimouni}
\address{Department of Mathematics and Statistics, King Fahd University of Petroleum \& Minerals, Dhahran 31261, KSA}
\email{amimouni@kfupm.edu.sa}

\date{\today}

\subjclass[2010]{13A15, 13A18, 13F05, 13G05, 13C20}


\begin{abstract}
 Let $R$ be an integral domain and $I$ a nonzero ideal of $R$.  An ideal $J\subseteq I$ is a $t$-reduction of $I$ if $(JI^{n})_{t}=(I^{n+1})_{t}$ for some positive integer $n$; and $I$ is $t$-basic if it has no $t$-reduction other than the trivial ones. This paper investigates $t$-reductions of ideals in pullback constructions of type $\square$. Section 2 examines the correlation between the notions of reduction and $t$-reduction in pseudo-valuation domains.  Section 3 solves an open problem on whether the finite $t$-basic and $v$-basic ideal properties are distinct. We prove that these two notions coincide in any arbitrary domain.  Section 4 features the main result, which establishes the transfer of the finite $t$-basic ideal property to pullbacks in line with Fontana-Gabelli's result on P$v$MDs \cite[Theorem 4.1]{FG} and Gabelli-Houston's result on $v$-domains \cite[Theorem 4.15]{GH}. This allows us to enrich the literature with new families of examples, which put  the class of domains subject to the finite $t$-basic ideal property strictly between the two classes of $v$-domains and integrally closed domains.
\end{abstract}
\maketitle

\section{Introduction}

\noindent Throughout, all rings considered are commutative with identity. Let $R$ be a ring and $I$ a proper ideal of $R$. An ideal $J\subseteq I$ is a reduction of $I$ if $JI^{n}=I^{n+1}$ for some positive integer $n$. The notion of reduction was introduced by Northcott and Rees to contribute to the analytic theory of ideals in  Noetherian local rings via minimal reductions. An ideal which has no reduction other than itself is called a basic ideal; and a ring has the finite basic ideal property (resp., basic ideal property) if every finitely generated ideal (resp., every ideal) of $R$ is basic. In \cite{H1,H2}, Hays investigated reductions of ideals in Noetherian rings and Pr\"ufer domains. He provided several conditions for an ideal to be basic. His two main results asserted that a domain $R$ is Pr\"ufer (resp., one-dimensional Pr\"ufer) if and only if $R$ has the finite basic ideal property (resp., basic ideal property).

Let $R$ be a domain and $I$ a nonzero fractional ideal of $R$. The $v$-, $t$-, and $w$-closures of $I$ are defined, respectively, by $I_v:=(I^{-1})^{-1}$, $I_t:=\cup J_v$, where $J$ ranges over the set of finitely generated subideals of $I$, and $I_w=\cap IR_M$ where $M$ ranges over the set of maximal $t$-ideals of $R$. Now, let $\star$ be a star operation on $R$ and $I$ a nonzero ideal of $R$. An ideal $J\subseteq I$ is a $\star$-reduction of $I$ if $(JI^{n})^{\star}=(I^{n+1})^{\star}$ for some positive integer $n$.

In \cite{HKM}, the authors extended Hays' aforementioned results to P$v$MDs; namely, a domain has the finite $w$-basic ideal property (resp., $w$-basic ideal property) if and only if it is a P$v$MD (resp., P$v$MD of $t$-dimension one). They also investigated relations among the classes of domains subject to various $\star$-basic properties. In this vein, the problem of whether the finite $t$- and $v$-basic ideal properties are distinct was left open. In \cite{KK}, the authors investigated the $t$-reductions and $t$-integral closure of ideals establishing satisfactory $t$-analogues of well-known results, in the literature, on the integral closure of ideals and its correlation with reductions. One of their main result \cite[Theorem 3.5]{KK} asserts that the $t$-closure and $t$-integral closure of an ideal coincide in the class of integrally closed domains.

This paper investigates $t$-reductions of ideals in pullback constructions of type $\square$ (defined in Section 4). Section 2 examines the correlation between the notions of reduction and $t$-reduction in pseudo-valuation domains.  Section 3 solves an open problem raised in \cite{HKM} on whether the finite $t$-basic and $v$-basic ideal properties are distinct. We prove that these two notions coincide in any arbitrary domain.  Section 4 features the main result, which establishes the transfer of the finite $t$-basic (equiv., $v$-basic) ideal property to pullbacks in line with Fontana-Gabelli's result on P$v$MDs \cite[Theorem 4.1]{FG} and Gabelli-Houston's result on $v$-domains \cite[Theorem 4.15]{GH}. This allows us to enrich the literature with new families of examples, which put  the class of domains subject to the finite $t$-basic ideal property strictly between the two classes of $v$-domains and integrally closed domains.

For a full treatment of the topic of reduction theory, we refer the reader to \cite{HS2}. For more details about star operations, we refer the reader to \cite{FHP1} and \cite[Sections 32 and 34]{G}.

\section{$t$-Reductions in pseudo-valuation domains}\label{r}

\noindent We first recall the definitions of $t$-reduction and related concepts such as the trivial $t$-reduction and (finite) $t$-basic ideal property.

\begin{definition}[\cite{HKM,KK}]
Let $R$ be a domain and $I$ a nonzero ideal of $R$.
\begin{xenumerate}
\item An ideal $J\subseteq I$ is a \emph{$t$-reduction} of $I$ if $(JI^{n})_{t}=(I^{n+1})_{t}$ for some integer $n\geq0$. The ideal $J$ is a \emph{trivial $t$-reduction} of $I$ if $J_{t}=I_{t}$.
\item $I$ is \emph{$t$-basic} if it has no $t$-reduction other than the trivial $t$-reductions.

\item $R$ has the (finite) $t$-basic ideal property if every nonzero (finitely generated) ideal of $R$ is $t$-basic.
\end{xenumerate}
\end{definition}

For any star operation $\star$,  the $\star$-reduction and related concepts are defined likewise. This is not to be confused with Epstein's $c$-reduction  \cite{Ep1,Ep2,Ep3}, which generalizes the original notion of reduction in a different way and was studied in different settings. Namely, let $c$ be a closure operation.  An ideal $J\subseteq I$ is a \emph{$c$-reduction of} $I$ if $J^{c}=I^{c}$. Thus, for $c:=\star$, Epstein's $c$-reduction coincides with the trivial $\star$-reduction.

In the sequel, we will be using the following obvious facts, for nonzero ideals $J\subseteq I$, without explicit mention:
$$J\ \text{is a $t$-reduction of}\ I \Leftrightarrow J\ \text{is a $t$-reduction of}\ I_{t} \Leftrightarrow  J_{t}\ \text{is a $t$-reduction of}\ I_{t}.$$

Recall that $R$ is a pseudo-valuation domain if $R$ is local and shares its maximal ideal with a valuation overring $V$ or, equivalently, if $R$ is a pullback issued from the following diagram
\[\begin{array}{ccl}
R=\varphi^{-1}(k)       & \longrightarrow                       & k\\
\downarrow              &                                       & \downarrow\\
V                       & \stackrel{\varphi}\longrightarrow     & K:=V/M
\end{array}\]
where $(V,M)$ is a valuation domain (with residue field $K$) and $k$ is a subfield of $K$. For the sake of simplicity, we will say that $R$ is a pseudo-valuation domain issued from $(V,M,k)$. For more details on pseudo-valuation domains, see \cite{HH,HH2} and also \cite{BH,BHLP,CD,D,Park}. 

Note that a reduction is necessarily a $t$-reduction; and the converse is not true in general. The next result investigates necessary and sufficient conditions for the notions of reduction and $t$-reduction to coincide in  pseudo-valuation domains. This result can be used readily to provide examples discriminating between the two notions of reduction and $t$-reduction.

\begin{thm}\label{r:pvd1}
Let $R$ be a pseudo-valuation domain issued from $(V,M,k)$ and set $K:=V/M$. Then, the following statements are equivalent:
\begin{enumerate}[label=\rm(\roman*)]
\item For every nonzero ideals $J\subseteq I$, $J$ is a $t$-reduction of $I$ $\Longleftrightarrow$ $J$ is a reduction of $I$.
\item For each $k$-vector subspace $W$ of $K$ containing $k$, $W^{n}$ is a field for some positive integer $n$.
\end{enumerate}
\end{thm}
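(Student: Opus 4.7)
The plan is to analyze ideals of $R$ via the associated $k$-vector subspaces of $K$. A central role is played by the $R$-submodules $T_W := \varphi^{-1}(W)$ indexed by $k$-subspaces $W$ of $K$ containing $k$, which satisfy $T_W T_{W'} = T_{WW'}$; and for any $a \in M \setminus \{0\}$, $aT_W$ is an ideal of $R$ inside $M$ with $(aT_W)^{n+1} = a^{n+1} T_{W^{n+1}}$.

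The first order of business is a key $t$-closure formula: for $a \in M \setminus \{0\}$ and any $k$-subspace $W$ with $k \subsetneq W \subseteq K$, one has $(aT_W)_t = aV$. This rests on the two PVD identities $M^{-1} = (R:M) = V$ and $(R:V) = M$. A residue-field argument shows that if $y \in (R:T_W)$ then $y \in (R:M) = V$ and $\bar y \cdot W \subseteq k$ in $K$, which forces $\bar y = 0$ since $\dim_k W \geq 2$; hence $(R:T_W) = M$, so $(aT_W)^{-1} = a^{-1}M$ and $(aT_W)_v = aV$ via the second identity. The upgrade from $v$-closure to $t$-closure is obtained by a supremum over finitely generated subideals (e.g., $(a, aw)R$ for $w \in W \setminus k$ already has $v$-closure $aV$). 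In particular, $M_t = M$.

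For (ii) $\Rightarrow$ (i): Given $J \subseteq I$ nonzero with $J$ a $t$-reduction, the case $I = R$ is handled by $M_t = M$ (so $J_t = R$ forces $J \not\subseteq M$, hence $J = R$). Otherwise $I \subseteq M$, and reducing to the representative case $I = aT_W$, $J = aT_{W'}$ with $k \subseteq W' \subseteq W$, the $t$-reduction equation is automatic whenever $W \supsetneq k$: both sides collapse to $a^{n+1} V$ by the key formula. Condition (ii) then forces $JI^n = I^{n+1}$ as actual ideals: $W^n$ stabilizes at a field $F$ with $W' \subseteq W \subseteq F$, and since $1 \in W'$ and $F$ is closed under multiplication, $W'F = F = W^{n+1}$ for $n$ large, translating back via $T_W T_{W'} = T_{WW'}$ to $JI^n = I^{n+1}$.

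For the converse direction, argue contrapositively: given a $k$-subspace $W$ with $k \subseteq W$ and $W^n$ never a field, construct $J \subseteq I$ with $J$ a $t$-reduction but not a reduction. If the chain $W \subseteq W^2 \subseteq \cdots$ is strictly ascending, take $I = aT_W$ and $J = aR$; then $J$ is a $t$-reduction (both sides equal $a^{n+1}V$) but $JI^n = I^{n+1}$ would force $W^n = W^{n+1}$, contrary to strict ascent. If instead the chain stabilizes at some $W^{n_0}$ which is a subring of $K$ but not a field, pick $f \in W^{n_0}$ with $f^{-1} \notin W^{n_0}$ and take $I = aT_{W^{n_0}}$, $J = afR$; the key formula applied to $fW^{n_0}$ (of $k$-dimension $\geq 2$) shows $J$ is a $t$-reduction, while $JI^n = I^{n+1}$ would give $fW^{n_0} = W^{n_0}$, forcing $f^{-1} \in W^{n_0}$, a contradiction. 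The main obstacle is the $t$-closure computation $(aT_W)_t = aV$: the $v$-closure step reduces cleanly to the PVD identities, but the passage to $t$-closure in the possibly non-finitely-generated case requires the finitely generated subideal analysis above; a secondary subtlety is the stabilize-as-ring case in the converse, where $J = afR$ is not of ``pullback-of-subspace'' type and the key formula must be reapplied to the twisted subspace $fW^{n_0}$.
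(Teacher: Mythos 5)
Your treatment of (i) $\Rightarrow$ (ii) (run contrapositively) is correct and takes a genuinely different route from the paper's. The paper tests only the pair $aR\subseteq a\varphi^{-1}(W)$, deduces the stabilization $W^{n}=W^{n+1}$ (so $W^{n}$ is a ring), and then upgrades ``ring'' to ``field'' by a separate argument showing $K$ is algebraic over $k$ (applying the stabilization to $k+\lambda k$). You instead dispose of the ``stabilizes at a ring that is not a field'' case directly, with the second test pair $a\tilde{f}R\subseteq a\varphi^{-1}(W^{n_{0}})$ and the twisted subspace $fW^{n_{0}}$; this avoids the algebraicity detour and is a clean alternative, provided you note that your key formula $(a\varphi^{-1}(W'))_{t}=aV$ must be proved for $k$-subspaces $W'$ of dimension at least $2$ that need not contain $k$ (your residue-field computation of $(R:\varphi^{-1}(W'))=M$ does cover this). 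Your key formula itself agrees with the paper's computation $(I^{r})_{v}=a^{r}V$, and your $v$-to-$t$ upgrade via the subideal $(a,aw)$ is fine (the paper instead invokes the fact that $t=v$ in a PVD).

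The genuine gap is in (ii) $\Rightarrow$ (i), at the phrase ``reducing to the representative case $I=a\varphi^{-1}(W)$, $J=a\varphi^{-1}(W')$ with $k\subseteq W'\subseteq W$.'' The ideals of a PVD are not exhausted by this form: every ideal of $V$ contained in $M$ is an ideal of $R$, and such ideals need not be principal over $V$, hence need not be of the form $a\varphi^{-1}(W)$ at all (think of $M$ itself when $V$ is not discrete). Moreover, even when $I=a\varphi^{-1}(W)$, the subideal $J$ could a priori be an ideal of $V$, or of the form $b\varphi^{-1}(F)$ with $bV\subsetneqq aV$ or with $1\notin F$; none of these fit your representative case. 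These are exactly the cases on which the paper spends most of its effort: it uses Hedstrom--Houston's theorem that every ideal of $V$ is a divisorial ideal of $R$ to settle the case where $I$ is an ideal of $V$ in one line, and to rule out $J$ being an ideal of $V$ when $I$ is not; and it uses the $t$-reduction hypothesis itself to force $JV=IV$ (so that the same $a$ can be used) and to handle $I=aR$ via the divisoriality of $M$. All of these cases can be closed, but they require ideas (the divisoriality of $V$-ideals in $R$, and the Bastida--Gilmer classification of the remaining ideals) that are absent from your outline, so as written the reduction is an assertion rather than a proof.
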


\begin{proof}
$(i)\Rightarrow(ii)$ Let $W$ be a $k$-vector subspace of $K$ with $k\subsetneqq W\subsetneqq K$. Let $0\neq a\in M$ and consider the ideals of $R$
$$J:=aR\subseteq I:=a\varphi^{-1}(W).$$
Let $r\geq 1$. Then, the fact that $k\subsetneqq W$ yields
$$(R:I^{r}) = a^{-r}\varphi^{-1}(k:W^{r})=a^{-r}M$$
and then $$(I^{r})_{v}=a^{r}M^{-1}=a^{r}V.$$
By \cite[Proposition 4.3]{HZ}, the $t$- and $v$- operations coincide in $R$. Hence, we have
$$(JI)_{t}=(aI)_{t}=aI_{t}=aI_{v}=a^{2}V=(I^{2})_{v}=(I^{2})_{t}$$
and so $J$ is a $t$-reduction of $I$. By (i), $J$ must be a reduction of $I$ and so
$$a^{n+1}\varphi^{-1}(W^{n})=JI^{n}=I^{n+1}=a^{n+1}\varphi^{-1}(W^{n+1})$$
for some positive integer $n$. It follows that $\varphi^{-1}(W^{n})=\varphi^{-1}(W^{n+1})$; i.e., $W^{n}=W^{n+1}$. Therefore $W^{n}=(W^{n})^{2}$ and thus $W^{n}$ is a ring. In particular, let $0\not=\lambda\in K$ and  let $W_{o}:=k+\lambda k$. Then, there is some positive integer $m$ such that
$$\begin{array}{rcl}
k+\lambda k+\dots+\lambda^{m}k      &=      &W_{o}^{m}\\
                                    &=      &W_{o}^{m+1}\\
                                    &=      &k+\lambda k+\dots+\lambda^{m+1}k.
\end{array}$$
So, $\lambda^{m+1}\in k+\lambda k+\dots+\lambda^{m}k$. Therefore $\lambda$ is algebraic over $k$ and thus $K$ is algebraic over $k$. Consequently, $W^{n}$ is a field, as desired.

$(ii)\Rightarrow(i)$ Let $J\subseteq I$ be a $t$-reduction of $I$; i.e., $(JI^{n})_{t}=(I^{n+1})_{t}$ for some positive integer $n$. If $I$ is an ideal of $V$, then both $JI^{n}$ and $I^{n+1}$ are ideals of $V$ so that $JI^{n}$ and $I^{n+1}$ are divisorial ideals of $R$ by \cite[Theorem 2.13]{HH}. Therefore, we obtain
$$JI^{n} = (JI^{n})_{v} =(JI^{n})_{t} = (I^{n+1})_{t} = (I^{n+1})_{v} = I^{n+1}.$$ That is, $J$ is a reduction of $I$. Next, assume that $I$ is not an ideal of $V$. Then, by \cite[Theorem 2.1(n)]{BG}, $I=a\varphi^{-1}(W)$ for some nonzero $a\in M$ and some $k$-vector space $W$ with $k\subseteq W\subset K$. Assume that $k = W$; i.e., $I=aR$. Then $J_{t}=aR$. Now, if $J\subsetneqq aR$, then $a^{-1}J\subsetneqq R$, hence $a^{-1}J\subseteq M$, whence $J\subseteq aM$. Since $M$ is a  divisorial ideal of $R$ \cite[Corollary 5]{HKLM}, we obtain
$$aR=J_{t}\subseteq (aM)_{t}=aM_{t}=aM$$
which is a contradiction. So, necessarily, $J=I$. Next, assume $k\subsetneqq W$. Suppose $J$ is an ideal of $V$. Then $JI^{n}$ would be an ideal of $V$ and hence a divisorial ideal of $R$, yielding
$$a^{n}J=JI^{n}=(JI^{n})_{v}=(JI^{n})_{t}=(I^{n+1})_{t}=(I^{n+1})_{v}=a^{n+1}V,$$
where the last equality is already handled in (i) $\Rightarrow$ (ii). It follows that
$$J=aV=IV\supseteq I\supseteq J.$$
That is, $J=I$ is an ideal of $V$, absurd. Hence, $J$ is not an ideal of $V$. So, since $J\subseteq I$, we may assume that $J=a\varphi^{-1}(F)$, where $F$ is a $k$-vector subspace of $W$. Now by hypothesis, $W^{s}=W^{s+1}$ is a field for some $s\geq 1$. It follows that
$$FW^{s}=W^{s+1}$$
yielding
$$JI^{s}=a^{s+1}\varphi^{-1}(FW^{s})=a^{s+1}\varphi^{-1}(W^{s+1})=I^{s+1}.$$
Hence $J$ is a reduction of $I$, completing the proof of the theorem.
\end{proof}

Note that the condition (ii) in the above result forces $K$ to be algebraic over $k$. In this vein, this fact can be used readily to provide examples of domains where the two notions of reduction and $t$-reduction are distinct.

\begin{example}
Let $R$ be a pseudo-valuation domain issued from $(V,M,k)$ and set $K:=V/M$.
\begin{xenumerate}
\item Assume that $K$ is a transcendental extension of $k$. Then, the notions of reduction and $t$-reduction are distinct in $R$. For instance, pick a transcendental element $\lambda\in K$ over $k$ and let $W:=k+k\lambda$, $I:=a\phi^{-1}(W)$ and $J=:aR$. Then, $J$ is a proper $t$-reduction of $I$, but $I$ is basic in $R$, as seen the proof of (i) $\Rightarrow$ (ii) of the above theorem.
\item Assume that $[K:k]$ is finite. Then for every $k$-submodule $W$ of $K$ with $k\subseteq W\subseteq K$, some power of $W$ is a field, and hence the notions of reduction and $t$-reduction coincide in $R$.
\end{xenumerate}
\end{example}

Given nonzero ideals $J\subseteq I$, if $J_{t}$ is a reduction of $I_{t}$, then $J$ is a $t$-reduction of $I$. The converse is not true in general as shown by
\cite[Example 2.2]{KK} which  consists of a domain containing two $t$-ideals $J\subsetneqq I$ such that $J$ is a $t$-reduction but not a reduction of $I$. The next result provides a class of (integrally closed) pullbacks where the two assumptions are always equivalent.

\begin{proposition}\label{r:pvd2}
Let $R$ be a pseudo-valuation domain and let $J\subseteq I$ be nonzero ideals of $R$. Then, $J$ is a $t$-reduction of $I$ if and only if $J_{t}$ is a reduction of $I_{t}$.
\end{proposition}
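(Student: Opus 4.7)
The reverse implication is immediate: if $J_{t}I_{t}^{n}=I_{t}^{n+1}$, applying the $t$-operation and using the identity $(A_{t}B_{t})_{t}=(AB)_{t}$ at once gives $(JI^{n})_{t}=(I^{n+1})_{t}$, so $J$ is a $t$-reduction of $I$.

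For the forward direction, my plan is to rewrite the hypothesis in the form $(J_{t}I_{t}^{n})_{t}=(I_{t}^{n+1})_{t}$ and then show that both sides are already divisorial, so that the $t$-closures strip off to give the desired strict equality $J_{t}I_{t}^{n}=I_{t}^{n+1}$. Recall that $t=v$ on a PVD by \cite[Proposition 4.3]{HZ}, so divisoriality and being a $t$-ideal are the same thing. The structural input is the classification of ideals in a PVD \cite[Theorem 2.1(n)]{BG}, which, combined with the $t$-closure computations already recorded in the proof of Theorem~\ref{r:pvd1} (notably $(a\varphi^{-1}(W))_{t}=aV$ when $k\subsetneq W\subsetneq K$), forces $I_{t}$ to be either an ideal of $V$ or a principal ideal $aR$.

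If $I_{t}$ is an ideal of $V$, then $I_{t}^{n+1}$ is a $V$-ideal and hence divisorial by \cite[Theorem 2.13]{HH}; and the product $J_{t}I_{t}^{n}$ inherits a $V$-module structure from $I_{t}^{n}$ via $V\cdot(J_{t}I_{t}^{n})=J_{t}(VI_{t}^{n})=J_{t}I_{t}^{n}$, so it is a $V$-ideal and therefore divisorial as well; the hypothesis then strips to $J_{t}I_{t}^{n}=I_{t}^{n+1}$. If instead $I_{t}=aR$ is principal, then $I_{t}^{n}=a^{n}R$ and, using that $J_{t}$ is a $t$-ideal, $(J_{t}I_{t}^{n})_{t}=a^{n}J_{t}$ while $(I_{t}^{n+1})_{t}=a^{n+1}R$; the hypothesis then yields $J_{t}=aR=I_{t}$, a trivial (and in particular valid) reduction.

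The main obstacle is the $V$-ideal case: products of $t$-ideals are not in general $t$-ideals, so divisoriality of $J_{t}I_{t}^{n}$ does not come for free. The argument rests on the stronger fact that $I_{t}$ is actually a $V$-module (not merely an $R$-module), which lets me invoke $V\cdot I_{t}^{n}=I_{t}^{n}$ to promote $J_{t}I_{t}^{n}$ to a $V$-submodule of the quotient field and hence to a divisorial ideal of $R$.
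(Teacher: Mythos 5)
Your proof is correct and follows essentially the same route as the paper: reduce to $t$-ideals, then split on whether $I_{t}$ is an ideal of $V$ (where $V$-ideals are divisorial in $R$ by \cite[Theorem 2.13]{HH}, so the $t$-closures strip off) or $I_{t}=aR$ is principal (where the reduction is forced to be trivial). The only cosmetic differences are that the paper invokes \cite[Lemma 1.2]{HKM} ($t$-basicness of principal ideals) in place of your direct computation $a^{n}J_{t}=a^{n+1}R$, and that it first sets aside the degenerate case $R=V$ so that the divisoriality statement for $V$-ideals applies.
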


\begin{proof}
Sufficiency is trivial. For the necessity, assume $R$ is issued from $(V,M,k)$ and, without loss of generality, $R\subsetneqq V$. Next, let $J$ be a $t$-reduction of $I$. Then, $J_{t}$ is a $t$-reduction of $I_{t}$ and hence we may assume that $J$ and $I$ are both $t$-ideals. So $(JI^{n})_{t} = (I^{n+1})_{t}$, for some integer $n\geq1$. If $I$ is an ideal of $V$, as in the proof of Theorem~\ref{r:pvd1} ((ii)$\Rightarrow$(i)), we get $JI^{n}=(JI^{n})_{t}=(I^{n+1})_{t}=I^{n+1}$;
that is, $J$ is a reduction of $I$. Next, suppose that $I$ is not an ideal of $V$. By \cite[Theorem 2.1(n)]{BG}, $I=a\varphi^{-1}(W)$ for some nonzero $a\in M$ and some $k$-vector space $W$ with $k\subseteq W\subset K:=V/M$. We claim that $k= W$. Otherwise, we would get, via \cite[Proposition 4.3]{HZ}, that
$I=I_{t}=I_{v}= aV$, where the last equality is already handled in the proof of Theorem~\ref{r:pvd1} ((i)$\Rightarrow$(ii)). It follows that $I$ is an ideal of $V$, the desired contradiction. So, necessarily, $k=W$ and then $I=aR$. By \cite[Lemma 1.2]{HKM}, $I$ is $t$-basic; that is, $J=I$, completing the proof.
\end{proof}

The class of Pr\"ufer domains is, so far, the only known class of domains where these two notions of reduction and $t$-reduction coincide. We close this section with the next result, which features necessary conditions for such a coincidence. For this purpose, recall that a domain where the trivial and $w$-operations are the same is said to be a DW-domain \cite{GP,HZ2,Mi}. Common examples of DW-domains are pseudo-valuation domains, Pr\"ufer domains, and quasi-Pr\"ufer domains (i.e., domains with Pr\"ufer integral closure) \cite[Page 190]{FP}.

\begin{proposition}\label{r:coincide}
Let $R$ be a domain where the notions of reduction and $t$-reduction coincide for all ideals of $R$. Then:
\begin{enumerate}
\item Every nonzero prime ideal of $R$ is a $t$-ideal. In particular, $R$ is a DW-domain.
\item $R$ is integrally closed if and only if $R$ has the finite $t$-basic ideal property.
\item $R$ is a P$v$MD  if and only if $R$ is a Pr\"ufer domain.
\end{enumerate}
\end{proposition}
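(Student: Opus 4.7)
The plan is to handle the three parts in sequence, using (1) as the engine for (3).

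For (1), argue by contradiction. Suppose $P$ is a nonzero prime with $P\subsetneq P_{t}$ and pick $x\in P_{t}\setminus P$. By definition of the $t$-operation, there is a finitely generated nonzero $F\subseteq P$ with $x\in F_{v}$. Setting $I:=F+xR$, the inclusion $I\subseteq F_{v}$ together with $F\subseteq I$ forces $F_{t}=F_{v}=I_{v}\supseteq I_{t}\supseteq F_{t}$, so $F_{t}=I_{t}$ and $F$ is a trivial $t$-reduction of $I$. By the standing hypothesis this is a genuine reduction, $FI^{n}=I^{n+1}$ for some $n\geq 1$. Expanding $I^{n+1}=(F+xR)^{n+1}$, every summand except $x^{n+1}R$ already lies in $FI^{n}$, yielding $I^{n+1}=FI^{n}+x^{n+1}R$ and therefore $x^{n+1}\in FI^{n}$. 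But every element of $FI^{n}$ is a sum of products each carrying a factor from $F\subseteq P$, so $x^{n+1}\in P$; primality gives $x\in P$, the desired contradiction. The ``in particular $R$ is DW'' tag is then the standard characterization of DW-domains by every maximal ideal being a $t$-ideal.

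For (2), the implication $(\Rightarrow)$ is where the standing hypothesis does the work. Let $I$ be finitely generated and $J\subseteq I$ a $t$-reduction; by hypothesis $J$ is a reduction, $JI^{n}=I^{n+1}$. The classical determinantal trick applied to the finitely generated module $I^{n}$ (using $xI^{n}\subseteq JI^{n}$) produces, for every $x\in I$, a monic integral equation with $k$-th coefficient in $J^{k}$, so $x$ lies in the $t$-integral closure $\bar{J}^{\,t}$. Since $R$ is integrally closed, \cite[Theorem 3.5]{KK} identifies $\bar{J}^{\,t}$ with $J_{t}$, giving $I\subseteq J_{t}$ and hence $I_{t}=J_{t}$. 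The reverse direction $(\Leftarrow)$ does not actually use the standing hypothesis: given $y$ in the fraction field integral over $R$ of degree $n$, the fractional ideal $E:=R+yR+\cdots+y^{n-1}R$ is idempotent ($E^{2}=E$) and contains $R$, so $R\cdot E=E=E^{2}$ makes $R$ a reduction of $E$. Clearing denominators, $dR$ is a reduction of the finitely generated integral ideal $dE$, hence a $t$-reduction, and the finite $t$-basic property forces $dR=(dE)_{t}=d\,E_{t}$. Therefore $E\subseteq E_{t}=R$, so $y\in R$, proving $R$ is integrally closed.

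For (3), one direction is immediate. Conversely, part (1) guarantees that every maximal ideal of $R$ is a $t$-ideal, so the set of maximal $t$-ideals coincides with the set of maximal ideals of $R$; in a P$v$MD, localizing at a maximal $t$-ideal produces a valuation domain, so every $R_{M}$ is a valuation domain and $R$ is Pr\"ufer.

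The main technical obstacle is the construction in (1): recognizing that the correct gadget is $I=F+xR$ with $F$ a finitely generated witness of $x\in P_{t}$, and then reading off the pivotal expansion $I^{n+1}=FI^{n}+x^{n+1}R$ that funnels $x^{n+1}$ into $P$. The remaining arguments then ride on the classical determinantal trick, \cite[Theorem 3.5]{KK}, and standard P$v$MD facts.
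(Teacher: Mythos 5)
Your proposal is correct, but it reaches the three conclusions by noticeably more self-contained routes than the paper. For (1), the paper simply notes that $P$ is a (trivial) $t$-reduction of $P_{t}$, hence by hypothesis a reduction, and then invokes Hays' theorem that every prime is a $C$-ideal (never a proper reduction of a larger ideal) to get $P=P_{t}$; your construction of $I=F+xR$ from a finitely generated witness of $x\in P_{t}$ and the expansion $I^{n+1}=FI^{n}+x^{n+1}R$ is in effect an elementary re-derivation of exactly the instance of that $C$-ideal fact you need, so you trade a citation for a short direct argument. For (2), your forward direction has the same skeleton as the paper's (reduction $\Rightarrow$ $I\subseteq\overline{J}$ $\Rightarrow$ $I\subseteq J_{t}$), only you route the last step through the $t$-integral closure and \cite[Theorem 3.5]{KK} where the paper uses \cite[Proposition 2.2(iii)]{Mi2}; for the converse the paper just cites \cite[Lemma 1.3]{HKM}, whereas you reprove it via the idempotent fractional ideal $E=R+yR+\cdots+y^{n-1}R$ and the finite $t$-basic property --- a clean, complete argument (note the standing hypothesis is indeed not needed there). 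For (3), the paper deduces from (1) that $R$ is a DW-domain and quotes \cite[Theorem 1.2]{GP} (a DW P$v$MD is Pr\"ufer), while you unwind that citation directly: maximal ideals are maximal $t$-ideals, so the P$v$MD localization criterion makes every $R_{M}$ a valuation domain. The net effect is that your version is longer but leans on fewer external results, while the paper's is a tighter citation-driven proof; both are sound.
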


\begin{proof}
(1) Let $P$ be a nonzero prime ideal of $R$. Clearly, $P$ is a $t$-reduction of $P_{t}$. By hypothesis, $P$ is then a reduction of $P_{t}$. But every prime ideal is a $C$-ideal (i.e., it is not a proper reduction of any larger ideal) \cite[Page 58]{H1}. It follows that $P = P_{t}$, as desired. In particular, every maximal ideal of $R$ is a $t$-ideal and, hence, $R$ is a DW-domain by \cite[Proposition 2.2]{Mi}.

(2) Assume that $R$ is integrally closed and let $I$ be a finitely generated ideal of $R$ and $J$ a $t$-reduction of $I$. By hypothesis, $J$ is a reduction of $I$. So, by a combination of \cite[Corollary 1.2.5]{HS2} and \cite[Proposition 2.2(iii)]{Mi2}, we get $I \subseteq \overline{J} \subseteq J_{t}$, where $\overline{J}$ denotes the integral closure of $J$. It follows that $J_{t} = I_{t}$; i.e., $I$ is $t$-basic, as desired. The converse is true for any arbitrary domain $R$ by \cite[Lemma 1.3]{HKM}.

(3) Assume $R$ is a P$v$MD. By hypothesis, the notions of reduction and $t$-reduction coincide in $R$ and, hence, $R$ is a DW-domain by (1) above. By \cite[Theorem 1.2]{GP}, $R$ is a Pr\"ufer domain. The converse is trivial.
\end{proof}

\section{Equivalence of the finite $t$- and $v$-basic ideal properties}

\noindent For the reader's convenience, recall that a domain is called a $v$-domain if all its nonzero finitely generated ideals are $v$-invertible; an excellent reference for $v$-domains is Fontana \& Zafrullah's comprehensive survey paper \cite{FZ}. Also, recall from \cite{HKM} the following diagram of implications, which puts into perspective the finite basic ideal property for each of the $t$-, $v$-, and $w$-operations:
\bigskip

\begin{center}
Krull domain\\
$\Downarrow$\\
P$v$MD = Finite $w$-basic ideal property\\
$\Downarrow$\\
$v$-domain\\
$\Downarrow$\\
{\color{blue} Finite $v$-basic ideal property\\
$\Downarrow$\\
Finite $t$-basic ideal property}\\
$\Downarrow$\\
Integrally closed domain
\end{center}
\bigskip

The problem of whether the fourth implication is reversible was left open in \cite[Section 3]{HKM}. The main result of this section (Theorem~\ref{b:main1}) solves this open problem. For this purpose, recall from \cite{KK} the following: Let $R$ be a domain and $I$ a nonzero ideal of $R$. An element $x\in R$ is $t$-integral over $I$ if there is an equation $x^{n}+a_{1}x^{n-1}+...+a_{n-1}x+a_{n}=0\ \mbox{ with }\ a_{i}\in (I^{i})_{t}\ \forall i=1,...,n$. Consider the two sets:
$$\widetilde{I}:=\big\{x\in R \mid x\ \text{is $t$-integral over}\ I\big\}$$
$$\widehat{I}:=\big\{x\in R \mid I\ \text{is a $t$-reduction of}\ (I,x)\big\}.$$
$\widetilde{I}$ is called the $t$-integral closure of $I$ and is an integrally closed ideal \cite[Theorem 3.2]{KK}, on the other hand, it is not
known if, in general, $\widehat{I}$ is an ideal (see Question 3.5 below). We always have
$$I_{t}\subseteq\widetilde{I}\subseteq\widehat{I}$$
where the first containment is trivial and the second is asserted by \cite[Proposition 3.7]{KK} and can be strict as shown by \cite[Example 3.10(a)]{KK}. However, for the trivial operation,
 it is well-known that the equality $\widetilde{I}=\widehat{I}$ always holds \cite[Corollary 1.2.2]{HS2}; this fact was used to show that the integral closure of an ideal is an ideal \cite[Corollary 1.3.1]{HS2}. Finally, in order to put Theorem~\ref{b:main1} into perspective, recall the following important result.

\begin{thm}[{\cite[Theorem 3.5]{KK}}]
For a domain $R$, the following two assertions are equivalent:
\begin{enumerate}[label=\rm(\roman*)]
\item $I_{t}=\widetilde{I}$ for each nonzero (finitely generated) ideal $I$ of $R$;
\item $R$ is integrally closed.
\end{enumerate}
\end{thm}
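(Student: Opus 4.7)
The plan is to prove the two implications separately; the harder direction is (ii) $\Rightarrow$ (i), which amounts to producing the inclusion $\widetilde{I}\subseteq I_{t}$ when $R$ is integrally closed (the reverse inclusion $I_{t}\subseteq\widetilde{I}$ is already recorded in the paragraph preceding the statement).

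For (ii) $\Rightarrow$ (i), suppose $R$ is integrally closed, fix a nonzero finitely generated ideal $I$, and let $x\in\widetilde{I}$, so that
$$
x^{n}+a_{1}x^{n-1}+\cdots+a_{n}=0\quad\text{with}\quad a_{i}\in(I^{i})_{t}.
$$
The idea is to convert this ``$t$-integral'' equation into an honest integrality equation over $R$ by multiplying through by a suitable element of $I^{-1}$. Concretely, for any $y\in I^{-1}$, $yI\subseteq R$ forces $y^{i}I^{i}\subseteq R$, i.e., $y^{i}\in(R:I^{i})=(I^{i})^{-1}$; and since $(I^{i})^{-1}=((I^{i})_{v})^{-1}$, it follows that $y^{i}(I^{i})_{t}\subseteq y^{i}(I^{i})_{v}\subseteq R$. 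In particular $y^{i}a_{i}\in R$ for every $i$. Multiplying the displayed equation by $y^{n}$ yields
$$
(xy)^{n}+(a_{1}y)(xy)^{n-1}+\cdots+(a_{n}y^{n})=0,
$$
which exhibits $xy$ as integral over $R$; hence $xy\in R$. Since $y\in I^{-1}$ was arbitrary, $x\in(I^{-1})^{-1}=I_{v}=I_{t}$, the last equality holding because $I$ is finitely generated.

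For (i) $\Rightarrow$ (ii), I would start from an element $x$ in the quotient field of $R$ that is integral over $R$. Writing $x=p/q$ with $p,q\in R$, $q\neq 0$, and clearing denominators in an integrality relation for $x$ produces
$$
p^{n}+(a_{1}q)p^{n-1}+(a_{2}q^{2})p^{n-2}+\cdots+(a_{n}q^{n})=0,
$$
with each coefficient $a_{i}q^{i}$ lying in $q^{i}R=(qR)^{i}\subseteq((qR)^{i})_{t}$. Setting $I:=qR$, this exhibits $p$ as $t$-integral over the finitely generated ideal $I$, so by hypothesis $p\in\widetilde{I}=I_{t}=qR$ (principal ideals are trivially $t$-closed). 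Hence $p=qr$ for some $r\in R$, and $x=r\in R$, proving that $R$ is integrally closed.

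The main, and essentially only, technical point is the implication ``$y\in I^{-1}\ \Longrightarrow\ y^{i}(I^{i})_{t}\subseteq R$''. It rests on the elementary identity $A^{-1}=(A_{v})^{-1}$ for every nonzero fractional ideal $A$, together with $(I^{i})_{t}\subseteq(I^{i})_{v}$. Once this is in hand, (ii) $\Rightarrow$ (i) is just the classical trick of multiplying an integrality-type equation by a sufficiently high power of a denominator, and (i) $\Rightarrow$ (ii) is a direct application of the hypothesis to the principal ideal generated by a denominator of $x$.
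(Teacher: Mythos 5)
Your proof is correct. Note first that the paper itself gives no argument for this statement: it is imported verbatim from \cite[Theorem 3.5]{KK}, so there is no internal proof to measure yours against. On its own terms, your argument is sound. The key step, that $y\in I^{-1}$ forces $y^{i}(I^{i})_{t}\subseteq y^{i}(I^{i})_{v}\subseteq R$ via $A^{-1}=(A_{v})^{-1}$, is right, and multiplying the $t$-integrality equation by $y^{n}$ does exhibit $xy$ as integral over $R$, giving $xI^{-1}\subseteq R$ and hence $x\in I_{v}=I_{t}$ for finitely generated $I$; the converse direction via the principal ideal $qR$ is the standard denominator trick and is also correct. One small point you should address to cover both readings of the parenthetical in (i): for a non-finitely-generated ideal $I$ your argument only yields $x\in I_{v}$, which need not equal $I_{t}$. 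This is easily patched: each coefficient $a_{i}\in(I^{i})_{t}$ already lies in $(F_{i}^{i})_{v}$ for some finitely generated $F_{i}\subseteq I$ (any finitely generated subideal of $I^{i}$ sits inside the $i$-th power of a finitely generated subideal of $I$), so with $F:=\sum F_{i}$ the element $x$ is $t$-integral over the finitely generated ideal $F$, whence $x\in F_{t}=F_{v}\subseteq I_{t}$ by the case you did prove.
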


Now, to the main result of this section.

\begin{thm}\label{b:main1}
For a domain $R$, the following assertions are equivalent:
\begin{enumerate}[label=\rm(\roman*)]
\item $I_{t}=\widehat{I}$ for each nonzero (finitely generated) ideal $I$ of $R$;
\item $R$ has the finite $t$-basic ideal property;
\item $R$ has the finite $v$-basic ideal property.
\end{enumerate}
\end{thm}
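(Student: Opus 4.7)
The plan is to establish the equivalences (ii)$\Leftrightarrow$(i) and (ii)$\Leftrightarrow$(iii), leaning on \cite[Proposition 3.7]{KK} (which supplies both $\widetilde{I}\subseteq\widehat{I}$ and $I\subseteq\widetilde{J}$ whenever $J$ is a $t$-reduction of $I$) and on Theorem 3.1 of the present excerpt (characterizing integrally closed domains by $\widetilde{I}=I_{t}$ on finitely generated ideals). A recurring device is the \emph{finite-type nature of the $t$-operation}: any $t$-reduction $J\subseteq I$ of a finitely generated $I$ contains a finitely generated subideal $J_{0}\subseteq J$ that is still a $t$-reduction of $I$, obtained by collecting, for each generator of $I^{n+1}$, the finitely many elements of $J$ whose $v$-closed product with $I^{n}$ already contains it.

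The easy implications go as follows. (ii)$\Rightarrow$(i) is immediate since $I_{t}\subseteq\widehat{I}$ is formal, and for $x\in\widehat{I}$ with $I$ finitely generated, $(I,x)$ is itself finitely generated and $I$ is a $t$-reduction of $(I,x)$, forcing $I_{t}=(I,x)_{t}\ni x$. For (i)$\Rightarrow$(ii), after the finite-type reduction to $J_{0}\subseteq J$ still a $t$-reduction of $I$, one chains $I\subseteq\widetilde{J_{0}}\subseteq\widehat{J_{0}}=(J_{0})_{t}$ (the last equality being (i) applied to the finitely generated $J_{0}$), concluding $J_{t}=I_{t}$. For (iii)$\Rightarrow$(ii), the same reduction gives a finitely generated $J_{0}$; since both $J_{0}$ and $I$ are finitely generated, the $t$- and $v$-closures coincide on $J_{0}I^{n}$ and $I^{n+1}$, so $J_{0}$ is also a $v$-reduction of $I$, and the finite $v$-basic hypothesis gives $(J_{0})_{v}=I_{v}$, hence $(J_{0})_{t}=I_{t}$ and $J_{t}=I_{t}$.

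The substantive direction is (ii)$\Rightarrow$(iii). Granted (ii), hence (i), Theorem 3.1 forces $R$ integrally closed. For a $v$-reduction $J\subseteq I$ with $I$ finitely generated, I aim to show $x\in J_{v}$ for every $x\in I$. A $v$-analog of the determinant trick applied to $xI^{n}\subseteq(JI^{n})_{v}$ should produce a $v$-integrality equation
\[
x^{N}+a_{1}x^{N-1}+\cdots+a_{N}=0\quad\text{with}\quad a_{i}\in(J^{i})_{v}.
\]
For any $z\in J^{-1}$, multiplying through by $z^{N}$ yields
\[
(xz)^{N}+(a_{1}z)(xz)^{N-1}+\cdots+a_{N}z^{N}=0,
\]
whose coefficients satisfy $a_{i}z^{i}\in(J^{i})_{v}\cdot z^{i}=((Jz)^{i})_{v}\subseteq R$ (since $Jz\subseteq R$ and $R$ is $v$-closed); integral closedness of $R$ then forces $xz\in R$ for every $z\in J^{-1}$, i.e., $x\in J_{v}$, so $I\subseteq J_{v}$ and $J_{v}=I_{v}$.

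The main obstacle is the $v$-analog of \cite[Proposition 3.7]{KK}. Unlike the $t$-operation, the $v$-closure is not of finite type, so the classical reduction to a finitely generated subideal of $J$, where the usual Cayley--Hamilton argument would extract the integrality equation, is unavailable. The determinant trick must instead be carried out directly against $(JI^{n})_{v}$, juggling the identities $(A_{v}B)_{v}=(AB)_{v}$ and $(JI^{n})^{-1}=J^{-1}:I^{n}$ to replace the matrix entries in $J$ of the classical setup by entries in $J_{v}$.
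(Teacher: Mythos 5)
Your proposal has two genuine gaps, one in each of the substantive directions. For (i)$\Rightarrow$(ii) you invoke, attributing it to \cite[Proposition 3.7]{KK}, the claim that $I\subseteq\widetilde{J}$ whenever $J$ is a $t$-reduction of $I$. That proposition only asserts $\widetilde{I}\subseteq\widehat{I}$, and the containment you need is false in general: applied with $J:=I$ and the larger ideal $(I,x)$ for $x\in\widehat{I}$, it would give $\widehat{I}\subseteq\widetilde{I}$, hence $\widehat{I}=\widetilde{I}$ always, contradicting the strictness exhibited in \cite[Example 3.10(a)]{KK}; worse, combined with \cite[Theorem 3.5]{KK} it would force every integrally closed domain to have the finite $t$-basic ideal property, contradicting Example~\ref{b:ex1} of this paper. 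Note also that under hypothesis (i) the statement ``$I\subseteq\widetilde{J_{0}}$'' is essentially the conclusion ``$I\subseteq (J_{0})_{t}$'' you are trying to reach, so the chain is circular. The paper circumvents this by a generator-by-generator descent: writing $I=(a_{1},\dots,a_{n})$, it passes to the enlarged $t$-reduction $(J,a_{1},\dots,a_{n-1})$ (enlarging a $t$-reduction costs nothing, Lemma~\ref{b:frac}(3)), views $I$ as $\big((J,a_{1},\dots,a_{n-1}),a_{n}\big)$ so that hypothesis (i) absorbs $a_{n}$ into $(J,a_{1},\dots,a_{n-1})_{t}$, and iterates; only $\widehat{F}=F_{t}$ for finitely generated $F$ is ever used, never a descent to intermediate ideals.

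For (ii)$\Rightarrow$(iii), what you offer is not a proof: the ``$v$-analog of the determinant trick'' producing an equation $x^{N}+a_{1}x^{N-1}+\cdots+a_{N}=0$ with $a_{i}\in(J^{i})_{v}$ is precisely the step you concede is unavailable, and since the $v$-operation is not of finite type there is no finitely generated subideal of $J$ against which to form a matrix; $xI^{n}\subseteq(JI^{n})_{v}$ gives no entrywise membership in $J_{v}$. The paper's argument avoids integrality entirely: write $J_{v}=\bigcap_{\lambda}(a_{\lambda})$ over the principal fractional ideals containing $J$ (\cite[Theorem 34.1]{G}); each $(a_{\lambda})=(J,a_{\lambda})$ is a $v$-reduction of $(I,a_{\lambda})$ by Lemma~\ref{b:frac}(1), hence a $t$-reduction because both ideals are finitely generated; the finite $t$-basic property, extended to fractional ideals, yields $(a_{\lambda})=(I,a_{\lambda})_{t}\supseteq I_{t}=I_{v}$, whence $I_{v}\subseteq\bigcap_{\lambda}(a_{\lambda})=J_{v}$. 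Your (ii)$\Rightarrow$(i) and your direct (iii)$\Rightarrow$(ii) are correct (the paper gets the latter from the diagram of \cite{HKM}), but the parenthetical in (i) also requires showing that the finitely generated case implies the general one, which the paper establishes by a finite-type argument on $I$ itself and which your write-up omits.
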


The proof of this result requires the following two lemmas.

\begin{lemma}[{\cite[Lemma 1.7]{HKM}}]\label{b:fg}
Let $R$ be a domain and let $I$ be a finitely generated ideal of $R$. If $J\subseteq I$ is a $t$-reduction of $I$, then there exists a finitely generated ideal $K\subseteq J$ such that $K$ is a $t$-reduction of $I$.
\end{lemma}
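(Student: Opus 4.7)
The plan is to exploit two finiteness features: first, that $I^{n+1}$ is finitely generated (since $I$ is), and second, that the $t$-operation has finite character, in the sense that any element of $A_t$ already lies in $B_v$ for some finitely generated $B\subseteq A$. Combining these with the fact that any product of an element of $J$ with an element of $I^n$ only involves one element of $J$, I can trim $J$ down to a finitely generated subideal without losing the $t$-reduction property.

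More precisely, fix $n\geq 0$ with $(JI^{n})_{t}=(I^{n+1})_{t}$ and write $I^{n+1}=(y_{1},\dots,y_{m})$, which is legitimate because $I$ is finitely generated. For each index $i$, the inclusion $y_{i}\in (I^{n+1})_{t}=(JI^{n})_{t}$ together with the defining property of the $t$-operation produces a finitely generated ideal $B_{i}\subseteq JI^{n}$ such that $y_{i}\in (B_{i})_{v}$. Setting $B:=B_{1}+\cdots+B_{m}$ yields a finitely generated ideal contained in $JI^{n}$ and satisfying $I^{n+1}\subseteq B_{v}\subseteq B_{t}\subseteq (JI^{n})_{t}$.

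Next I would pass from $B$ to a finitely generated subideal of $J$. Each generator of $B$ is an $R$-linear combination of elements of the form $j\cdot x$ with $j\in J$ and $x\in I^{n}$; since $B$ is finitely generated, only finitely many such $j$'s, say $j_{1},\dots,j_{s}\in J$, are needed to exhibit every generator of $B$. I then define $K:=(j_{1},\dots,j_{s})\subseteq J$, which is finitely generated and satisfies $B\subseteq KI^{n}$ by construction. Consequently,
\[
(I^{n+1})_{t}\subseteq B_{v}\subseteq (KI^{n})_{v}\subseteq (KI^{n})_{t}.
\]
The reverse containment is automatic from $K\subseteq J$, since $KI^{n}\subseteq JI^{n}\subseteq (I^{n+1})_{t}$ gives $(KI^{n})_{t}\subseteq (I^{n+1})_{t}$. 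Thus $(KI^{n})_{t}=(I^{n+1})_{t}$, so $K$ is the desired finitely generated $t$-reduction of $I$ contained in $J$.

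I do not anticipate a genuine obstacle: the argument is a bookkeeping exercise using the finite character of the $t$-closure. The only point that requires a moment's care is the second reduction step, where one must observe that passing from the finitely generated ideal $B\subseteq JI^{n}$ to a finitely generated subideal of $J$ is possible precisely because each generator of $B$ uses only finitely many elements of $J$; this is why the hypothesis that $I$ (equivalently, $I^{n+1}$) is finitely generated is essential to start the argument, but $J$ itself is not assumed to be finitely generated.
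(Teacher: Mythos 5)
Your proof is correct and is essentially the argument behind the cited \cite[Lemma 1.7]{HKM} (the paper itself only quotes this lemma without proof): use the finite character of the $t$-operation on the finitely many generators of $I^{n+1}$ to produce a finitely generated $B\subseteq JI^{n}$ with $I^{n+1}\subseteq B_{v}$, then extract the finitely many elements of $J$ needed to express the generators of $B$. The one step worth flagging is the inclusion $(KI^{n})_{v}\subseteq (KI^{n})_{t}$, which in general goes the other way ($A_{t}\subseteq A_{v}$ for any ideal $A$) and here holds, with equality, only because $KI^{n}$ is finitely generated.
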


 Note that, for any given $\star$-operation, $\star$-reductions of (integral) ideals can be naturally extended to fractional ideals. The following lemma collects basic results on $\star$-reductions of (fractional) ideals.

\begin{lemma}\label{b:frac}
For a domain $R$, let $K\subseteq J\subseteq I$ and $J'\subseteq I'$ be nonzero fractional ideals of $R$.
\begin{enumerate}
\item  If $J$ and $J'$ are $\star$-reductions of $I$ and $I'$, respectively, then $J+J'$ is a $\star$-reduction of $I+I'$ and $JJ'$ is a $\star$-reduction of $II'$.
\item If $K$ is a $\star$-reduction of $J$ and $J$ is a $\star$-reduction of $I$, then $K$ is a $\star$-reduction of $I$.
\item If $K$ is a $\star$-reduction of $I$, then $J$ is a $\star$-reduction of $I$.
\item $J$ is a $\star$-reduction of $I$ $\Leftrightarrow$ $J^{n}$  is a $\star$-reduction of $I^{n}$.
\item  If $J=(a_{1}, ...,a_{k})$, then: $J$ is a $\star$-reduction of $I$ $\Leftrightarrow$ $(a_{1}^{n}, ...,a_{k}^{n})$  is a $\star$-reduction of $I^{n}$.
\end{enumerate}
\end{lemma}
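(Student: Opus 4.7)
The backbone of the argument will be a single extension lemma that I would establish up front: if $(JI^{n})^{\star}=(I^{n+1})^{\star}$, then $(JI^{m})^{\star}=(I^{m+1})^{\star}$ for every $m\geq n$, and more generally $(J^{s}I^{n})^{\star}=(I^{n+s})^{\star}$ for every $s\geq 1$. Both follow by a short induction from the identity $(A^{\star}B)^{\star}=(AB)^{\star}$, which is how star operations interact with products. With this in hand, most of the parts reduce to bookkeeping.

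Parts (2)--(4) should fall out quickly. For (2), write $(KJ^{n})^{\star}=(J^{n+1})^{\star}$ and $(JI^{m})^{\star}=(I^{m+1})^{\star}$; then $(KJ^{n}I^{m})^{\star}=((J^{n+1})^{\star}I^{m})^{\star}=(J^{n+1}I^{m})^{\star}=(I^{m+n+1})^{\star}$ by the extension lemma, and since $J\subseteq I$ sandwiches $KJ^{n}I^{m}\subseteq KI^{n+m}\subseteq I^{n+m+1}$, the outer two $\star$-closures coincide, forcing $(KI^{n+m})^{\star}=(I^{n+m+1})^{\star}$. Part (3) is an immediate sandwich: $KI^{n}\subseteq JI^{n}\subseteq I^{n+1}$. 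For (4), the forward direction uses $(J^{n}I^{nk})^{\star}=(I^{n(k+1)})^{\star}$ for $k$ large via the extension lemma; the converse uses $J^{n}\subseteq JI^{n-1}$ together with $(J^{n}I^{N})^{\star}=(I^{N+n})^{\star}$ to sandwich $(JI^{N+n-1})^{\star}$ between two equal terms.

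The more delicate parts are (1) and (5). For $J+J'$ in (1), I would fix a common integer $N$ large enough that both $(JI^{N})^{\star}=(I^{N+1})^{\star}$ and $(J'(I')^{N})^{\star}=((I')^{N+1})^{\star}$ hold, then expand $(I+I')^{2N+2}$ into a sum of monomials $I^{i}(I')^{j}$ with $i+j=2N+2$. By pigeonhole, each such monomial satisfies either $i\geq N+1$ or $j\geq N+1$; in the first case $I^{i}(I')^{j}\subseteq(JI^{i-1}(I')^{j})^{\star}\subseteq((J+J')(I+I')^{2N+1})^{\star}$ using $I^{N+1}\subseteq(JI^{N})^{\star}$, and symmetrically in the second. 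Summing and taking $\star$ yields $(J+J')$ is a $\star$-reduction of $I+I'$. For $JJ'$, the computation is direct: $((II')^{N+1})^{\star}=(I^{N+1}(I')^{N+1})^{\star}=(JI^{N}\cdot J'(I')^{N})^{\star}=(JJ'(II')^{N})^{\star}$, using the identity $(A^{\star}B)^{\star}=(AB)^{\star}$ twice.

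Part (5) will use the classical pigeonhole fact that $J^{k(n-1)+1}\subseteq(a_{1}^{n},\ldots,a_{k}^{n})$, since in any monomial $a_{1}^{e_{1}}\cdots a_{k}^{e_{k}}$ with $\sum e_{i}>k(n-1)$ some exponent must reach $n$. From this, choosing $s$ so that $n(s+1)\geq k(n-1)+1$ gives $J^{n(s+1)}\subseteq(a_{1}^{n},\ldots,a_{k}^{n})J^{ns}$, which says $(a_{1}^{n},\ldots,a_{k}^{n})$ is a $\star$-reduction of $J^{n}$; composing with (4) and (2) finishes the forward direction, while the reverse comes from (3) applied to the chain $(a_{1}^{n},\ldots,a_{k}^{n})\subseteq J^{n}\subseteq I^{n}$ together with (4). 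The main obstacle throughout will be to manipulate the star operation cleanly on sums and products without slipping on the non-distributivity of $\star$; I expect the $J+J'$ statement of (1) to be the most error-prone, so I would write that argument first and in the most detail.
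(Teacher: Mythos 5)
Your proposal is correct: the extension identity $(J^{s}I^{m})^{\star}=(I^{m+s})^{\star}$ for large $m$, the sandwich arguments for (2)--(4), and the pigeonhole decompositions of $(I+I')^{2N+2}$ and of $J^{k(n-1)+1}$ for (1) and (5) all check out. The paper itself gives no argument here — it merely instructs the reader to substitute $\star$ for $t$ in the proofs of \cite[Lemmas 2.5, 2.6 and 2.7]{KK} — and your write-up is precisely the standard chain of manipulations those lemmas carry out, so this is essentially the same route, just made self-contained.
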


\begin{proof}
Substitute ``$\star$" for ``$t$" and ``fractional ideals" for ``(integral) ideals" in the proofs of \cite[Lemmas 2.5, 2.6 and 2.7]{KK}.
\end{proof}

\begin{proof}[Proof of Theorem~\ref{b:main1}]
In view of the aforementioned diagram, we only need to prove (i) $\Leftrightarrow$ (ii) $\Rightarrow$ (iii). First, let us prove that if the equality  $\widehat{I}=I_{t}$ holds for all nonzero finitely generated ideals then it holds for all nonzero ideals. Indeed, let $I$ be an ideal of $R$ and $x \in R$ such that $I$ is a $t$-reduction of $(I,x)$. So,
$$(I(I,x)^{n})_{t} = ((I,x)^{n+1})_{t}$$
for some positive integer $n$. Hence, $x^{n+1} \in (I(I,x)^{n})_{t}$. Whence, $x^{n+1} \in A_{v}$ for some finitely generated ideal $A\subseteq I(I,x)^{n}$. Moreover, there exist finitely generated subideals $F_{o},F_{1}\dots,F_{n}$ of $I$ such that
$$A \subseteq F_{o}(F_{1},x)(F_{2},x)\cdots(F_{n},x).$$
 Set $F:=\sum_{i=o}^{n} F_{i} \subseteq I$. Then, $A \subseteq F(F,x)^{n}$ and so
 $$x^{n+1} \in (F(F,x)^n)_{v}=(F(F,x)^n)_{t}.$$
 It follows that
 $$((F,x)^{n+1})_{t}=(F(F,x)^{n},x^{n+1})_{t}\subseteq (F(F,x)^{n})_{t}.$$
 Thus,  $F$ is a $t$-reduction of $(F,x)$. Since $F$ is finitely generated, then by hypothesis $x\in\widehat{F}=F_{t}\subseteq I_{t}$. Consequently, $\widehat{I}\subseteq I_{t}$ and, as mentioned above, the reverse inclusion always holds by \cite[Proposition 3.7]{KK}.

Assume that $R$ has the finite $t$-basic ideal property and let $I$ be a finitely generated ideal of $R$ and $x \in \widehat{I}$. Necessarily, $I_{t} = (I,x)_{t}$ which forces $x\in I_{t}$.  Consequently, $\widehat{I}= I_{t}$. Conversely, assume that (i) holds. Let $I:=(a_{1},\dots,a_{n})$ be a nonzero finitely generated ideal of $R$ ($n\geq1$) and  let $J$ be a $t$-reduction of $I$. By Lemma~\ref{b:fg}, we may assume that $J$ is finitely generated. Clearly, we have
$$J \subseteq (J,a_{1},\dots,a_{n-1}) \subseteq I.$$
By \cite[Lemma 2.6]{KK}, $(J,a_{1},\dots,a_{n-1})$ is a $t$-reduction of $I$ which can be regarded as $\big((J,a_{1},\dots,a_{n-1}),a_{n}\big)$. Hence, by hypothesis, $a_{n}\in \widehat{(J,a_{1},\dots,a_{n-1})}=(J,a_{1},\dots,a_{n-1})_{t}$. It follows that
$$I_{t}=(J,a_{1},\dots,a_{n-1})_{t}.$$
But $J$, being a $t$-reduction of $I_{t}$, is also a $t$-reduction of $(J,a_{1},\dots,a_{n-1})$. Therefore, we re-iterate the above process by removing one generator at each step. Eventually, we get $I_{t}=J_{t}$, as desired. This proves (i) $\Leftrightarrow$ (ii).

Assume that $R$ has the finite $t$-basic ideal property and let $I$ be a finitely generated ideal of $R$ and $J$ a $v$-reduction of $I$. So
$$J_{v} = \bigcap_{\lambda \in \Lambda}(a_{\lambda})$$
where the $(a_{\lambda})$'s are the nonzero principal fractional ideals of $R$ containing $J$ by \cite[Theorem 34.1]{G}. By Lemma~\ref{b:frac}, $(a_{\lambda})=(J,a_{\lambda})$ is a $v$-reduction of $(I,a_{\lambda})$ for each $\lambda \in \Lambda$. Hence $(a_{\lambda})$ is a $t$-reduction of $(I,a_{\lambda})$ as both ideals are finitely generated.  Since $R$ has the finite $t$-basic ideal property, one can easily verify that every nonzero \emph{fractional} ideal of $R$ is $t$-basic. Hence, $(a_{\lambda}) = (I,a_{\lambda})_{t}$ for each $\lambda \in \Lambda$. Therefore
$$I_{v} = I_{t} \subseteq \bigcap_{\lambda \in \Lambda}(a_{\lambda}) = J_{v}.$$
Hence, $I_{v}=J_{v}$; that is, $I$ is $v$-basic.  This proves (ii) $\Rightarrow$ (iii), completing the proof of the theorem.
\end{proof}

New examples of domains subject to the finite $t$-basic (equiv., $v$-basic) ideal property will be provided in the next section.  We close this section with the following open question:

\begin{question}
Let $I$ be a nonzero ideal, is $\widehat{I}$ always an ideal?
\end{question}

\section{Transfer of the finite $t$-basic ideal property to pullbacks}

\noindent Let us fix notation for this section. Let $T$ be a domain, $M$ a maximal ideal of $T$, $K$ its residue field, $\varphi:T\longrightarrow K$ the canonical surjection, and $D$ a proper subring of $K$ with quotient field $k$. Let $R$ be the pullback issued from the following diagram of canonical homomorphisms:
\[\begin{array}{cccl}
                    &R                  & \longrightarrow                       & D\\
(\ \square\ )       &\downarrow         &                                       & \downarrow\\
                    &T                  & \stackrel{\varphi}\longrightarrow     & K=T/M.
\end{array}\]

So, $R:=\varphi^{-1}(D)\subsetneqq T$.  This section establishes necessary and sufficient conditions for a pullback  of type $\square$  issued from local domains to inherit the finite $t$-basic (equiv., $v$-basic) ideal property. Recall, at this point, that a domain with the $t$-basic ideal property is completely integrally closed \cite[Proposition 1.4]{HKM}.  Therefore, by \cite[Lemma 26.5]{G}, a pullback of type $\square$ never has the $t$-basic ideal property.

It is worthwhile recalling that the finite $t$-basic ideal property lies between the two notions of $v$-domain and integrally closed domain \cite{HKM}; and that the finite $w$-basic ideal property coincides with the  P$v$MD property \cite[Theorem 2.1]{HKM}. Also, the transfer of the notions of P$v$MD and $v$-domain to pullbacks was established, respectively, by Fontana \& Gabelli in \cite{FG} and by Gabelli \& Houston in \cite{GH}, which summarizes as follows:

\begin{thm}[{\cite[Theorem 4.1]{FG} \& \cite[Theorem 4.15]{GH}}]
Let $R$ be a pullback of type $\square$. Then, $R$ is P$v$MD (resp., $v$-domain) if and only if $T$ and $D$ are P$v$MDs (resp., $v$-domains), $T_{M}$ is a valuation domain, and $k=K$.
\end{thm}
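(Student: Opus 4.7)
The plan is to prove the biconditional by analyzing finitely generated ideals of $R$ through the pullback decomposition. Both the P$v$MD and $v$-domain conditions concern invertibility of finitely generated ideals (under the $t$- and $v$-operations, respectively), so the two cases run in parallel with only minor modifications. The central technical device is a description, for each finitely generated ideal $I \subseteq R$, of the inverse $I^{-1}$ in terms of $(IT)^{-1}$ computed in $T$ and of the inverse of the image of $I+M$ computed in $D$. The analysis then splits into two cases depending on whether $I \subseteq M$ or $I \not\subseteq M$ (equivalently, whether $IT = T$).

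For the sufficiency direction, suppose $T$ and $D$ are P$v$MDs (resp.\ $v$-domains), $T_M$ is a valuation domain, and $k = K$. For a finitely generated ideal $I$ of $R$ with $I \subseteq M$, the ideal $IT_M$ is finitely generated in the valuation domain $T_M$, hence principal, while $IT$ is $t$- (resp.\ $v$-) invertible in $T$ by hypothesis; one pulls this back to $R$ using the fact that $M$ is a common ideal of $R$ and $T$. For $I \not\subseteq M$, the image $(I+M)/M$ is a nonzero finitely generated ideal of $D$, hence $t$- (resp.\ $v$-) invertible; combining $IT = T$ with the condition $k = K$ (which guarantees that every residue class in $K$ is represented in $D$) one assembles an inverse of $I$ in $R$.

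For the necessity direction, assume $R$ is a P$v$MD (resp.\ $v$-domain). The quotient $D \cong R/M$ inherits the property by lifting finitely generated ideals of $D$ to ideals of $R$ containing $M$ and descending their invertibility. The condition that $T_M$ be valuation is then forced because, under the hypotheses, $M$ is a $t$-ideal of $R$, $R_M = T_M$, and the localization of a P$v$MD / $v$-domain at a $t$-prime is valuation. That $T$ itself is a P$v$MD (resp.\ $v$-domain) follows by combining this local behavior at $M$ with the fact that $T$ agrees with $R$ after inverting elements outside $M$. The equality $k = K$ is where the real work lies: assuming $k \subsetneq K$, pick $\alpha \in K \setminus k$, lift to $t \in T$ with $\varphi(t) = \alpha$, and for any nonzero $m \in M$ form the finitely generated ideal $I := (m, mt)R$. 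A direct computation modeled on the $v$-closure calculations in Theorem~\ref{r:pvd1} shows that $I^{-1} \subseteq m^{-1} M$, so $(I \cdot I^{-1})_v \subseteq M_v \subsetneq R$, contradicting the $v$- (resp.\ $t$-) invertibility of $I$.

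The principal obstacle, therefore, is the step $k = K$, which requires an explicit handle on inverses and $v$-closures in the pullback that captures the arithmetic of the residual extension $k \subseteq K$. A secondary but essential difficulty is establishing the pullback formulas for $I^{-1}$ and $I_v$ uniformly enough that both the $t$-invertibility (for the P$v$MD case) and the $v$-invertibility (for the $v$-domain case) can be extracted from the same structural analysis without reproving the whole machinery twice.
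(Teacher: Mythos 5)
This statement is quoted in the paper from \cite[Theorem 4.1]{FG} and \cite[Theorem 4.15]{GH}; the paper gives no proof of its own, so there is nothing internal to compare against. Judged on its own terms, your outline follows the broad architecture of the Fontana--Gabelli and Gabelli--Houston arguments (pullback formulas for $I^{-1}$, the case split on $I$ versus $M$, descent to $D\cong R/M$, and the ideal $(m,mt)$ with $\varphi(t)\notin k$ to force $k=K$ --- that last computation is sound, since $(R:(1,t))\subseteq M$ for any such $t$, whence $(II^{-1})_v\subseteq M_v=M\subsetneq R$).

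There is, however, a genuine gap in the necessity direction for $v$-domains: you derive ``$T_M$ is a valuation domain'' from the principle that \emph{the localization of a $v$-domain at a $t$-prime is a valuation domain}. That principle is false. It characterizes P$v$MDs (Griffin: $R$ is a P$v$MD iff $R_Q$ is a valuation domain for every maximal $t$-ideal $Q$), and since $v$-domains strictly contain P$v$MDs, there exist $v$-domains --- indeed one-dimensional completely integrally closed local domains --- whose maximal ideal is a $t$-ideal but which are not valuation domains. So your argument proves the P$v$MD half of this step but not the $v$-domain half, which is precisely the harder case. The correct route goes through the pullback structure rather than a general localization principle: for $a,b\in M$ with $(a,b)T$ not invertible in $T$, one has $((a,b)R)^{-1}=(M:(a,b))=((a,b)T)^{-1}$ (this is \cite[Proposition 2.7(a)]{GH}, used elsewhere in the paper), which forces $(a,b)\cdot(a,b)^{-1}\subseteq M$ and hence $((a,b)(a,b)^{-1})_v\subseteq M_v=M\subsetneq R$, contradicting $v$-invertibility; therefore every two-generated ideal of $T$ inside $M$ is invertible, i.e., locally principal at $M$, and $T_M$ is a valuation domain. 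A secondary ordering issue: you invoke $R_M=T_M$ before establishing $k=K$, but that equality of localizations is itself equivalent to $\qf(D)=K$ in this setting, so the step $k=K$ must come first.
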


Finally, recall that if $T$ is integrally closed, then the integral closure of $R$ is $\varphi^{-1}(\overline{D})$, where $\overline{D}$ denotes the integral
closure of $D$ in $K$. This follows easily from the fact that $R$ and $T$ have the same quotient field. Next, we announce the main result of this section which allows us to enrich the literature with new families of examples, putting  the new class of domains subject to the finite $t$-basic ideal property strictly  between the two classes of $v$-domains and integrally closed domains.

\begin{thm}\label{b:main2}
Let $R$ be a pullback of type $\square$ such that $T$ is local. Then, $R$ has the finite $t$-basic ideal property if and only if $T$ and $D$ have the finite $t$-basic ideal property and $k=K$.
\end{thm}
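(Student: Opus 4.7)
The plan is to leverage Theorem~\ref{b:main1}, which identifies the finite $t$-basic and finite $v$-basic ideal properties and characterizes them via $\widehat{I} = I_{t}$. I will also invoke \cite[Proposition 1.4]{HKM}, which forces $R$ (hence $T$, and $D$ as a subring of $K$) to be integrally closed, using the integral closure formula for pullbacks recalled just before the statement. The argument parallels those of Fontana--Gabelli \cite[Theorem 4.1]{FG} and Gabelli--Houston \cite[Theorem 4.15]{GH}, but without needing $T$ to be a valuation domain, since the finite $t$-basic property is strictly weaker than being a $v$-domain.

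For necessity, assume $R$ has the finite $t$-basic ideal property. To show $k = K$, I argue by contradiction in the spirit of the proof of Theorem~\ref{r:pvd1}. Supposing $k \subsetneqq K$, pick $\lambda \in K \setminus k$, a lift $t \in T$ with $\varphi(t) = \lambda$, and $0 \neq m \in M$. Set $J := mR \subseteq I := (m, mt)R$. A direct computation gives $(R : I^{n}) = m^{-n}M$ (since $\lambda \notin k$ forces any $x \in R$ with $\varphi(x)\lambda^{i} \in D$ for $1 \leq i \leq n$ to lie in $M$), hence $(I^{n})_{v} = m^{n}(R : M)$; as $R \subsetneqq T \subseteq (R : M)$, this exhibits $J$ as a proper $t$-reduction of $I$, a contradiction. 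For the inheritance to $T$, I take a finitely generated $t$-reduction $B \subseteq A$ in $T$, multiply by a nonzero $m \in M$ to bring the ideals into $R$, use the standard pullback identity that the $v$-closure of an ideal of $T$ contained in $M$ agrees whether computed in $R$ or in $T$, and apply finite $t$-basic of $R$. For the inheritance to $D$, I lift a finitely generated $t$-reduction $B \subseteq A$ in $D$ via $\varphi^{-1}$ to ideals of $R$ containing $M$, use the correspondence between $t$-ideals of $D$ and $t$-ideals of $R$ lying over $M$ to promote this to a $t$-reduction in $R$, and apply finite $t$-basic of $R$.

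For sufficiency, assume $T$ and $D$ have the finite $t$-basic property and $k = K$. Let $I$ be a nonzero finitely generated ideal of $R$ with a $t$-reduction $J \subseteq I$; by Lemma~\ref{b:fg} I may assume $J$ is finitely generated. I split on whether $I \subseteq M$. If $I \subseteq M$, then $I$ and $J$ are finitely generated ideals of $T$, the pullback identity for $v$-closures (now used in the reverse direction) promotes $J$ to a $t$-reduction of $I$ in $T$, and finite $t$-basic of $T$ closes the case. If $I \not\subseteq M$, the hypothesis $k = K$ makes $\varphi(I)$ a nonzero finitely generated fractional ideal of $D$ and guarantees that the $v$-operation of $R$ on $I$ corresponds faithfully to the $v$-operation of $D$ on $\varphi(I)$ up to the contribution of $I \cap M$; the $t$-reduction equality thus descends to one in $D$, which is killed by finite $t$-basic of $D$, and a careful lift combined with the previous case recovers $J_{t} = I_{t}$ in $R$. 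The main obstacle is precisely this descent: the hypothesis $k = K$ is what enforces the compatibility of $\varphi$ with $v$-closures of ideals of $R$ not contained in $M$, so that a pullback identity of the form $I_{v}^{R} = \varphi^{-1}(\varphi(I)_{v}^{D})$ (with a suitable correction for the $M$-part) can be established and used; this is the $t$-analogue of the delicate step at the heart of \cite[Theorem 4.1]{FG} and \cite[Theorem 4.15]{GH}.
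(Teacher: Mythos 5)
Your overall architecture matches the paper's, and your argument for $k=K$ is correct and is a nice, more self-contained variant: for $\lambda\in K\setminus k$ one indeed gets $(R:(m,mt)^{n})=m^{-n}M$, hence $((m,mt)^{n})_{v}=m^{n}(R:M)\supseteq m^{n}T\supsetneq m^{n}R$, so $mR$ is a non-trivial $t$-reduction of the two-generated ideal $(m,mt)$. (The paper instead quotes \cite[Proposition 2.4]{GH} to produce $x\in T\setminus R$ with $(R:(1,x))=M$.) However, the two transfer steps between $R$ and $T$, which are where the real content of the theorem lies, have genuine gaps as you describe them.

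For the inheritance by $T$: if $A$ is a finitely generated ideal of $T$ and $0\neq m\in M$, then $mA$ is an ideal of $R$ but is finitely generated over $T$, not over $R$ (already $mT$ is $R$-isomorphic to $T$), so the \emph{finite} $t$-basic property of $R$ cannot be applied to $mB\subseteq mA$. Moreover, the ``standard pullback identity'' you invoke is false as stated: since every $T$-submodule of $R$ lies in $(R:T)=M$, one has $(R:L)=(M:L)$ for any ideal $L$ of $T$ inside $M$, and already $(mT)_{v}=m(R:M)=m(M:M)$, which properly contains $(mT)_{v_{1}}=mT$ whenever $(M:M)\neq T$. The paper avoids both problems by first proving $k=K$, deducing that $T$ is a localization of $R$, writing $I=AT$ and $J=BT$ with $A,B$ finitely generated ideals of $R$, and invoking \cite[Proposition 2.7(1)(b)]{GH} (after disposing separately of the case where $(I^{n+1})_{v_{1}}$ is principal). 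In the sufficiency, your case $I\subseteq M$ opens with the false assertion that $I$ and $J$ are ideals of $T$; they are only ideals of $R$. One must pass to $JT\subseteq IT$, show $JT$ is a $t$-reduction of $IT$ via the duality $(LT)^{-1}=L^{-1}T$ of \cite[Proposition 2.2(1)]{GH2}, and then descend $(JT)_{t_{1}}=(IT)_{t_{1}}$ back to $J_{t}=I_{t}$; this descent is the delicate point, resting on $(IT)^{-1}=(M:I)=I^{-1}$, which \cite[Proposition 2.7(a)]{GH} provides only when $IT$ is not principal. The complementary subcase, $II^{-1}\nsubseteq M$, must be handled separately (the paper scales $I$ to an ideal properly containing $M$ and falls back on the other case). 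Your sketch addresses neither point, and the analogous verifications in your case $I\nsubseteq M$ (that $J$, $JI^{n}$ and $I^{n+1}$ all properly contain $M$, which is needed before the $\varphi$-correspondence of $t$-closures can be applied) are also missing.
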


\begin{proof}
Assume that $R$ has the finite $t$-basic ideal property. We first prove that $k= K$. Assume, by way of contradiction, that $k \subsetneqq K$. By \cite[Proposition 2.4]{GH}, there is an element $x \in T \setminus R$ with $(R:(1,x)) = M$. Hence
$$x^{2} (R:(1,x)) = x^{2}M \subseteq TM \subseteq R;\ \text{ i.e.,}\ x^{2} \in (1,x)_{v}.$$
Therefore, for any nonzero $m\in M$, we have
$$x^{2}m^{2} \in (m^{2},xm^{2})_{v}=(m^{2},xm^{2})_{t}$$
and so
$$((m,xm)^{2})_{t}=(m^{2}, xm^{2})_{t}=(m(m,xm))_{t}$$
forcing $(m)$  to be a $t$-reduction of $(m,xm)$ in $R$. Whence, $(m,xm)_{t} = (m)$. It follows that $xm \in (m)$ and thus $x \in R$, the desired contradiction.
Next, we prove that $T$ has the finite $t$-basic ideal property. Below, we denote by $v_{1}$ and $t_{1}$ the $v$- and $t$- operations with respect to $T$. Let $I$ be a nonzero finitely generated proper ideal of $T$ and $J$ a $t$-reduction of $I$. So $(JI^n)_{t_{1}} = (I^{n+1})_{t_{1}}$ for some positive integer $n$. We may assume, by Lemma~\ref{b:fg}, that $J$ is finitely generated. If $(I^{n+1})_{v_{1}}$ is principal; say, $(I^{n+1})_{t_{1}}=(I^{n+1})_{v_{1}} = (a)$ for some nonzero $a \in T$, then
$$aJ_{t_{1}} = (JI^{n+1})_{t_{1}} = (I^{n+2})_{t_{1}} = aI_{t_{1}}$$
yielding $J_{t_{1}} = I_{t_{1}}$. Next, suppose that $(JI^n)_{v_{1}} = (I^{n+1})_{v_{1}}$ is not principal. Since $k=K$, then $T$ is a localization of $R$ (cf. \cite{F,Kab}). So, $J=BT$ and $I=AT$, for some nonzero finitely generated ideals $B\subseteq A$ of $R$. By \cite[Proposition 2.7(1)(b)]{GH}, we obtain
$$(A^{n+1})_{t}=(A^{n+1})_{v}=(I^{n+1})_{v_{1}}=(I^{n+1})_{t_{1}}=(JI^n)_{t_{1}}=(JI^n)_{v_{1}}=(BA^n)_{v}=(BA^n)_{t}.$$
It follows that $B$ is a $t$-reduction of $A$ and thus $B_{t}=A_{t}$. By \cite[Lemma 3.4]{Kg2}, we get
$$J_{t_{1}} = (B_{t}T)_{t_{1}} = (A_{t}T)_{t_{1}} = I_{t_{1}}.$$
Therefore, in both cases, we showed that $J$ is a trivial $t$-reduction of $I$, as desired.
Next, we show that $D$ has the finite $t$-basic ideal property. Let $A$ be a nonzero finitely generated ideal of $D$ and let $B$ be a $t$-reduction of $A$. Let $t_{D}$ denote the $t$-operation with respect to $D$. So, $(BA^{n})_{t_{D}} = (A^{n+1})_{t_{D}}$ for some positive integer $n$. We may assume, by Lemma~\ref{b:fg}, that $B$ is finitely generated. By \cite[Corollary 1.7]{FG}, $I:=\varphi^{-1}(A)$ and $J:=\varphi^{-1}(B)$ are two nonzero finitely generated ideals of $R$ (containing $M$). Since $k=K$, by \cite[Proposition 1.6(a) \& Proposition 1.8(a3)]{FG}, we obtain
$$(JI^{n})_{t}=(\varphi^{-1}(BA^{n}))_{t}=\varphi^{-1}((BA^{n})_{t_{D}})=\varphi^{-1}( (A^{n+1})_{t_{D}})=(\varphi^{-1}( A^{n+1}))_{t}=(I^{n+1})_{t}.$$
 Hence $J$ is a $t$-reduction of $I$ and thus $J_{t}=I_{t}$. It follows that
$$B_{t_{D}}=\varphi(\varphi^{-1}(B_{t_{D}}))=\varphi(J_{t})=\varphi(I_{t})=\varphi(\varphi^{-1}(A_{t_{D}}))=A_{t_{D}}$$
completing the proof of the ``only if" assertion.

Conversely, assume that $T$ and $D$ have the finite $t$-basic ideal property and $k=K$. Notice that, in presence of the latter assumption, $M$ cannot be finitely generated \cite[Lemma 4.1]{GH}. Also, recall that we always have $M_{v}=M$ \cite[Corollary 5]{HKLM}. Next,  let $I$ be a nonzero finitely generated ideal of $R$ and let $J$ be a finitely generated subideal of $I$ with  $(JI^{n})_{t} = (I^{n+1})_{t}$ for some positive integer $n$. By \cite[Proposition 1.6]{GH2}, any ideal of $R$ is comparable to $M$. So, we envisage two cases:\\
{\bf Case 1:} Suppose that $M\subsetneqq I$.  We first claim that $M \subsetneqq I^{n+1}$; otherwise, $I^{n+1} \subseteq M$ yields, by \cite[Proposition 1.1]{FG}, $T = (IT)^{n+1} = I^{n+1}T \subseteq MT = M$, absurd. Moreover, we have $M \subsetneqq J$; otherwise, we would have
$$J \subseteq M \subsetneqq I^{n+1}\subseteq J_{t}=J_{v},$$
which is absurd. Further, we claim that $M \subsetneqq JI^{n}$; otherwise, $JI^{n} \subseteq M$ yields via \cite[Proposition 1.1]{FG}
$$T=(JT)(IT)^{n}=(JI^{n})T\subseteq MT=M,$$
which is absurd. Now, let $A:=\varphi(I)$ and $B:=\varphi(J)$, two nonzero finitely generated ideals of $D$. Therefore, by \cite[Proposition 1.6(b) \& Proposition 1.8(b3)]{FG}, we get
$$(BA^{n})_{t_{D}}=(\varphi(JI^{n}))_{t_{D}}=\varphi((JI^{n})_{t})=\varphi((I^{n+1})_{t})=(\varphi(I^{n+1}))_{t_{D}}=(A^{n+1})_{t_{D}}.$$
Hence $B$ is a $t$-reduction of $A$ and thus $B_{t_{D}}=A_{t_{D}}$. It follows that
$$J_{t}=\varphi^{-1}(\varphi(J_{t}))=\varphi^{-1}(B_{t_{D}})=\varphi^{-1}(A_{t_{D}})=\varphi^{-1}(\varphi(I_{t}))=I_{t}.$$
{\bf Case 2:} Suppose that $I\subsetneqq M$. If $II^{-1} \nsubseteq M$, then there is a nonzero $x\in\qf(R)$ with $M \subsetneqq xI\subseteq R$, hence $xJ_{t} = xI_{t}$ by Case 1, whence $J_{t} = I_{t}$. So, we may assume $II^{-1} \subseteq M$. Now, note that $(JI^n)^{-1} = (I^{n+1})^{-1}$. So, by \cite[Proposition 2.2(1)]{GH2}, we have
$$\begin{array}{rcl}
(JI^nT)_{t_{1}} &=      &(JI^nT)_{v_{1}}\\
                &=      &((JI^nT)^{-1})^{-1}\\
                &=      &((JI^n)^{-1}T)^{-1}\\
                &=      &((I^{n+1})^{-1}T)^{-1}\\
                &=      &((I^{n+1}T)^{-1})^{-1}\\
                &=      &(I^{n+1}T)_{v_{1}}\\
                &=      &(I^{n+1}T)_{t_{1}}.
\end{array}$$
Hence $JT$ is a $t$-reduction of $IT$. It follows, via \cite[Proposition 2.2(1)]{GH2}, that
$$J^{-1}T=(JT)^{-1}=((JT)_{v_{1}})^{-1}=((JT)_{t_{1}})^{-1}=((IT)_{t_{1}})^{-1}=((IT)_{v_{1}})^{-1}=(IT)^{-1}=I^{-1}T.$$
On the other hand, the assumption $II^{-1} \subseteq M$ yields
$$(IT)(IT)^{-1}=II^{-1}T\subseteq MT=M.$$
Hence $IT$ is not invertible and, a fortiori, not principal in $T$. Therefore, by \cite[Proposition 2.7(a)]{GH}, we get
$$J^{-1}\subseteq J^{-1}T = I^{-1}T = (IT)^{-1} = (M:I) = I^{-1}\subseteq J^{-1}.$$
Consequently, $I_{t} = I_{v} = J_{v} = J_{t}$, completing the proof of the theorem.
\end{proof}

Theorem~\ref{b:main2} allows us to enrich the literature with new families of examples, which put the class of domains subject to the finite $t$-basic ideal property strictly between the two classes of integrally closed domains and $v$-domains.

\begin{example}\label{b:ex1}
Consider any non-trivial pseudo-valuation domain $R$ issued from $(V,M,k)$ with $k$ algebraically closed in $K:=V/M$. Then, $R$ is an integrally closed domain by \cite[Theorem 2.1]{BG}, which does not have the finite $t$-basic ideal property by Theorem~\ref{b:main2}. Moreover, the two notions of reduction and $t$-reduction are distinct in $R$ by Proposition~\ref{r:coincide}(2).
\end{example}

\begin{example}\label{b:ex2}
Consider any pullback $R$ of type $\square$ issued from $(T,M,D)$ where $\qf(D)=T/M$, $T$ is a non-valuation local $v$-domain, and $D$ is a $v$-domain.  Then, $R$ has the finite $t$-basic ideal property by \cite[Proposition 1.6]{HKM} and Theorem~\ref{b:main1} and Theorem~\ref{b:main2}, which is not a $v$-domain by \cite[Theorem 4.15]{GH}. One can easily build non-valuation local $v$-domains via pullbacks through \cite[Theorem 4.15]{GH}.
\end{example}

Here is a specific example, where we ensure, moreover, that the two notions of reduction and $t$-reduction are distinct.

\begin{example}\label{b:ex2-1}
Let $T:=\Q(X)[[Y, Z]]=\Q(X)+M$ and $R:=\Z[X]+M$. Clearly, $T$ and $D:=\Z[X]$ have the finite $t$-basic property (since they are Noetherian Krull domains). By   Theorem~\ref{b:main2}, $R$ has the finite $t$-basic property. Also $R$ is not a $v$-domain since $T$ is a non-valuation local domain. Next, let $0\not=a\in\Z$ and consider the finitely generated ideal of $R$ given by $I:=(a, X)\Z[X]+M=aR+XR$. Clearly $I^{-1}=R$ and so $(I^{s})^{-1}=R$, for every positive integer $s$. In particular, we have
$$(I^{2}I)_{t}=(I^{3})_{t}=(I^{3})_{v}=R=(I^{2})_{v}=(I^{2})_{t}$$
and hence $I^{2}$ is a $t$-reduction of $I$. However, $I^{2}$ is not a reduction of $I$; otherwise, if $I^{n+2}=I^{2}I^{n}=I^{n+1}$, for some $n\geq 1$, this would contradict \cite[Theorem 76]{Ka}. It follows that the notions of reduction and $t$-reduction are distinct in $R$, as desired.
\end{example}

We close this section with the following two open questions.

\begin{question}
Is Theorem~\ref{b:main2} valid for the classical pullbacks $R=D+M$ issued from $T:=K+M$ not necessarily local? The idea here is that (since $k=K$, then) $T=S^{-1}R$ for $S:=D\setminus \{0\}$. Clearly, the current proof of the ``only if" assertion works for this context.
\end{question}

\begin{question}
Is Theorem~\ref{b:main2} valid for the non-local case through an additional assumption on $T_{M}$? The idea here is that, ``($k=K$ and hence) $R_{M}=T_{M}$" is a necessity for the finite $t$-basic property; and for the P$v$MD and $v$-domain notions, $R_{M}=T_{M}$ is a valuation domain. So, one needs to investigate this localization for the $t$-basic ideal property in this context.
\end{question}


\begin{thebibliography}{99}




\bibitem{BH}    A. Badawi and E. Houston, Powerful ideals, strongly primary ideals, almost pseudo-valuation domains, and conducive domains, Comm. Algebra \textbf{30} (4) (2002) 1591--1606.

\bibitem{BHLP}    V. Barucci, E. Houston, T. G. Lucas, and I. Papick, $m$-canonical ideals in integral domains II, pp. 89--108, Lecture Notes in Pure and Appl. Math., 220, Dekker, New York, 2001.

\bibitem{BG}    E. Bastida and R. Gilmer, Overrings and divisorial ideals of rings of the form D+M, Michigan Math. J. \textbf{20} (1973) 79--95.






\bibitem{CD}   R. D. Chatham and D. E. Dobbs, On pseudo-valuation domains whose overrings are going-down domains, Houston J. Math. \textbf{28} (1) (2002) 13--19.




\bibitem{D}  D. E. Dobbs, Coherent pseudo-valuation domains have Noetherian completions, Houston J. Math. \textbf{14} (4) (1988) 481--486.


\par\bibitem{Ep1}  N. Epstein, A tight closure analogue of analytic spread, Math. Proc. Cambridge Philos. Soc.  \textbf{139} (2)  (2005) 371--383.

\par\bibitem{Ep2}  N. Epstein, Reductions and special parts of closures, J. Algebra  \textbf{323} (8)  (2010) 2209--2225.

\par\bibitem{Ep3}  N. Epstein, A guide to closure operations in commutative algebra, Progress in commutative algebra 2,  1--37, Walter de Gruyter, Berlin, 2012.


\bibitem{FG}  M. Fontana and S. Gabelli, On the class group and the local class group of a pullback, J. Algebra  \textbf{181} (3)  (1996) 803--835.

\bibitem{F}     M. Fontana, Topologically defined classes of commutative rings, Ann. Mat. Pura Appl. \textbf{123} (1980) 331--355.



\bibitem{FHP1}  M. Fontana, J. Huckaba, and I. Papick, Pr\"ufer domains, Monographs and Textbooks in Pure and Applied Mathematics \textbf{203} Marcel Dekker, Inc. New York, 1997.


\bibitem{FP}  M. Fontana and G. Picozza, On some classes of integral domains defined by Krull's a.b. operations, J. Algebra \textbf{341} (2011) 179--197.

\bibitem{FZ}   M. Fontana and M. Zafrullah, On $v$-domains: a survey. Commutative Algebra. Noetherian and
non-Noetherian Perspectives, pp. 145--179, Springer, New York, 2011.

\bibitem{GH}    S. Gabelli and E. Houston, Coherentlike conditions in pullbacks, Michigan Math. J. \textbf{44} (1997) 99--123.

\bibitem{GH2}  S. Gabelli and E. Houston, Ideal theory in pullbacks, in: Scott T. Chapman, Sarah Glaz (Eds), Non-Noetherian Commutative Ring Theory, Mathematics and its Applications, Kluwer Academic Publishers, Dordrecht, \textbf{520} (2000) 199--227.

\bibitem{GP}  S. Gabelli and G. Picozza, $w$-stability and Clifford $w$-regularity of polynomial rings, Comm. Algebra  \textbf{43}  (2015) 59--75.

\bibitem{G}    R. Gilmer, Multiplicative ideal theory, Pure and Applied Mathematics, No. 12., Marcel Dekker, Inc., New York, 1972.


\bibitem{H1}    J. Hays, Reductions of ideals in commutative rings, Trans. Amer. Math. Soc. \textbf{177} (1973) 51--63.

\bibitem{H2}    J. Hays, Reductions of ideals in Pr\"ufer domains, Proc. Amer. Math. Soc. \textbf{52} (1975) 81--84.

\bibitem{HH}   J. Hedstrom and E. Houston, Pseudo-valuation domains, Pacific J. Math. \textbf{75} (1978) 137--147.

\bibitem{HH2}   J. Hedstrom and E. Houston, Pseudo-valuation domains II, Houston J. Math. \textbf{4} (2) (1978) 199--207.




\bibitem{HKLM} E. Houston, S. Kabbaj, T. Lucas, and A. Mimouni, Duals of ideals in pullback constructions, Lecture Notes in Pure and Appl. Math., Dekker,  171 (1995) 263--276.

\bibitem{HKM} E. Houston, S. Kabbaj, and A. Mimouni, $\star$-Reductions of ideals and Pr\"ufer $v$-multiplication domains, J. Commut. Algebra, to appear. $\langle$arXiv:1602.07035$\rangle$

\bibitem{HZ} E. Houston and M. Zafrullah, Integral domains in which each $t$-ideal is divisorial, Michigan Math. J. \textbf{35} (1988) 291--300.

\bibitem{HZ2} E. Houston and M. Zafrullah, Integral domains in which any two $v$-coprime elements are comaximal, J. Algebra  \textbf{423}  (2015) 93--113.



\bibitem{HS2}    C. Huneke and I. Swanson, Integral closure of ideals, rings, and modules, London Mathematical Society Lecture Note Series, 336. Cambridge University Press, Cambridge, 2006.



\bibitem{Kab}   S. Kabbaj, On the dimension theory of polynomial rings over pullbacks. In ``Multiplicative ideal theory in commutative algebra," pp. 263--277, Springer,  2006.

\bibitem{KK} S. Kabbaj and A. Kadri, $t$-Reductions and $t$-integral closure of ideals, Rocky Mountain J. Math., to appear. $\langle$arXiv:1602.07041$\rangle$







\bibitem{Kg2}   B. G. Kang, Pr\"ufer $v$-multiplication domains and the ring $R[X]_{N_{v}}$, J. Algebra {\bf 123} (1989) 151-170.

\bibitem{Ka}    I. Kaplansky, Commutative rings, The University of Chicago Press, Chicago, 1974.




\bibitem{Mi}   A. Mimouni, Integral domains in which each ideal is a $w$-ideal, Comm. Algebra {\bf 33} (2005) 1345--1355.

\bibitem{Mi2}    A. Mimouni, Integral and complete integral closures of ideals in integral domains, J. Algebra Appl. \textbf{10} (2011) 701--710.



\bibitem{Park}   M. H. Park, Krull dimension of power series rings over a globalized pseudo-valuation domain, J. Pure Appl. Algebra \textbf{214} (6) (2010) 862--866.





\end{thebibliography}
\end{document}